\newtheorem{theorem}{Theorem}[section]
\newtheorem{problem}[theorem]{Problem}
\theoremstyle{definition}
\newtheorem{corollary}[theorem]{Corollary}
\theoremstyle{remark}
\newtheorem{remark}[theorem]{Remark}
\numberwithin{equation}{section}
\DeclareMathOperator{\tr}{tr}
\DeclareMathOperator{\diag}{diag}
\title{Leading Coefficients and the Multiplicity of Known Roots}
\author{Gregory J. Clark}
\address{Department of Mathematics, University of South Carolina, Columbia, South Carolina 29208}
\email{gjclark@math.sc.edu}
\thanks{This work was partially supported by a SPARC Graduate Research Grant
from the Office of the Vice President for Research at the University of South Carolina}
\author{Joshua N. Cooper}
\address{Department of Mathematics, University of South Carolina, Columbia, South Carolina 29208}
\email{cooper@math.sc.edu}
\subjclass[2010]{Primary 12E05; Secondary 05C50, 05C65, 12Y05}
\date{June 11, 2018}
\keywords{Polynomial root multiplicity, proper leading coefficients, stable Vandermonde inversion, spectral hypergraph theory}
\begin{document}
\maketitle
\begin{abstract}
We show that a monic univariate polynomial over a field of characteristic zero, with $k$ distinct non-zero known roots, is determined by its $k$ proper leading coefficients by providing an explicit algorithm for computing the multiplicities of each root.  We provide a version of the result and accompanying algorithm when the field is not algebraically closed by considering the minimal polynomials of the roots.  Furthermore, we show how to perform the aforementioned algorithm in a numerically stable manner over $\mathbb{C}$, and then apply it to obtain new characteristic polynomials of hypergraphs.
\end{abstract}

\section{Introduction} \label{sec:intro}
In \cite{Gyo}, Gy\H ory et al. present the following natural question.
\begin{problem}
\label{P:1}
Let $K$ be a field of characteristic zero. Is it true that a monic polynomial $p = \sum_{i=0}^n c_ix^{n-i}\in K[x]$ of degree $n$ with exactly $k$ distinct zeros is determined up to finitely many possibilities by any $k$ of its non-zero coefficients?
\end{problem}

By degrees-of-freedom considerations, at least $k$ coefficients are needed; which sets of $k$ coefficients actually suffice, however, seems to be a delicate matter.  We consider the following variation of Problem \ref{P:1}.  The {\em codegree} of a monomial term $Cx^k$ in a univariate polynomial $f(x)$ is $\deg(f)-k$.  A set of coefficients is {\em leading} if it corresponds to codegrees $\{0,\ldots,k\}$ for some $k$; it is {\em proper leading} if it corresponds to codegrees $\{1,\ldots,k\}$ for some $k$.

\begin{problem}
\label{P:2}
Let $K$ be a field of characteristic zero.  Is it true that a monic polynomial $p = \sum_{i=0}^n c_ix^{n-i}\in K[x]$ of unknown degree $n$, with exactly $k$ distinct known zeros $r_1, r_2, \dots, r_k$, is uniquely determined by its first $k$ proper leading coefficients?
\end{problem}

We answer Problem \ref{P:2} in the affirmative with the following result.

\begin{theorem}
\label{T:1}
Let $p = \sum_{i=0}^n c_ix^{n-i} \in K[x]$ be a monic polynomial with $k+1$ distinct roots, $r_0 = 0, r_1, r_2, \dots, r_k$, with multiplicity $m_0, m_1, m_2, \dots m_k$, respectively.  Then the multiplicities are uniquely determined by $c_0=1,c_1, \dots, c_k$.
\end{theorem}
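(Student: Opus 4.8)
The plan is to reduce the statement to the inversion of a linear system in the unknown multiplicities whose coefficient matrix is visibly invertible. First I would strip off the root at $0$: write $p(x)=x^{m_0}q(x)$ with $q(x)=\prod_{j=1}^{k}(x-r_j)^{m_j}$ and $N:=\deg q=\sum_{j=1}^{k}m_j$, noting $N\ge k$ since each $m_j\ge 1$. Multiplying by $x^{m_0}$ only shifts indices, so the coefficient of codegree $i$ in $p$ equals the coefficient of codegree $i$ in $q$ for every $i$; in particular $c_0,c_1,\dots,c_k$ are exactly the top $k+1$ coefficients of $q$. Thus it suffices to recover $m_1,\dots,m_k$ from these $k+1$ numbers, after which $m_0=\deg p-N$ is determined.

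Next I would pass from coefficients to power sums. Let $\pi_\ell:=\sum_{j=1}^{k}m_j r_j^{\ell}$ be the $\ell$-th power sum of the roots of $q$ counted with multiplicity; the root $0$ contributes nothing to any $\pi_\ell$, which is exactly why $k$ coefficients will suffice rather than $k+1$. Newton's identities express $\pi_\ell$, for $1\le\ell\le k$, recursively in terms of $c_1,\dots,c_\ell$ alone, and since $\operatorname{char}K=0$ each such step is invertible; hence $\pi_1,\dots,\pi_k$ are determined by $c_1,\dots,c_k$.

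It then remains to invert the map $(m_1,\dots,m_k)\mapsto(\pi_1,\dots,\pi_k)$. This is the linear system $\sum_{j=1}^{k}r_j^{\ell}\,m_j=\pi_\ell$ for $\ell=1,\dots,k$, whose coefficient matrix $\bigl(r_j^{\ell}\bigr)_{\ell,j=1}^{k}$ factors as $W\,\diag(r_1,\dots,r_k)$ with $W=\bigl(r_j^{\ell-1}\bigr)_{\ell,j=1}^{k}$ a Vandermonde matrix. Since the $r_j$ are distinct, $\det W=\prod_{1\le i<j\le k}(r_j-r_i)\ne 0$, and since they are non-zero, $\det\diag(r_1,\dots,r_k)=\prod_{j=1}^{k}r_j\ne 0$; so the matrix is invertible and $(m_1,\dots,m_k)$ is uniquely determined. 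Together with $m_0=\deg p-\sum_{j=1}^{k}m_j$, this recovers all the multiplicities.

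I do not expect a genuine obstacle: the hypotheses that $0$ is a root, that the non-zero roots are distinct, and that $\operatorname{char}K=0$ are precisely what make the Newton conversion and the Vandermonde inversion both go through. The points that need care are the bookkeeping in the first step — that the prescribed leading coefficients of $p$ really are leading coefficients of $q$, and that there are enough of them since $N\ge k$ — and the observation that the $\ell$-th Newton identity uses only $c_1,\dots,c_\ell$, so that $c_1,\dots,c_k$ already pin down $\pi_k$.
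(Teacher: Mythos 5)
Your proof is correct and follows essentially the same route as the paper: pass from the given leading coefficients to the power sums $\pi_1,\dots,\pi_k$ of the nonzero roots (the paper does this via the Faddeev--LeVerrier recursion, which is precisely Newton's identities applied to the characteristic polynomial of a diagonal matrix), then recover $\mathbf{m}$ by inverting the system $\sum_j r_j^\ell m_j = \pi_\ell$, whose matrix factors as a Vandermonde matrix times $\diag(r_1,\dots,r_k)$. The paper packages the two inversion steps into a single explicit formula $\mathbf{m}=(\Lambda C V_0)^{-1}\mathbf{c}$ (which it later reuses for the numerical-stability analysis), whereas you strip off $x^{m_0}$ up front rather than observing that $r_0=0$ contributes nothing to the power sums; these are cosmetic differences, not distinct arguments.
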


Furthermore, $p$ may be determined by fewer than $k$ proper coefficients when $K$ is not algebraically closed. 

\begin{theorem}
\label{T:4}
Let $p = \sum_{i=0}^n c_ix^{n-i} \in K[x]$ be a monic polynomial such that $p(0) \neq 0$.  Suppose $p = \prod_{i=1}^t q_i^{m_i}$ for $q_i \in K[x]$.  The multiplicity vector ${\bf m} = \langle m_1, \dots, m_t \rangle^T$ is uniquely determined by the first $t$ proper coefficients if and only if $V \in \overline{K}^{t \times t}$ is non-singular where 
\[
V_{i,j} = \sum_{r : q_i(r) = 0} r^j.
\]
\end{theorem}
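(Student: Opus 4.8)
The plan is to route everything through the power sums of the roots of $p$. Write the given factorization as $p=\prod_{i=1}^{t}q_i^{m_i}$ with the $q_i$ monic, pairwise coprime and square-free --- the case of interest, since $p(0)\neq0$ means $x\nmid p$, being that the $q_i$ are the distinct monic irreducible factors of $p$. For $j\ge1$ let $p_j=\sum_{\rho}\rho^{\,j}$ denote the $j$-th power sum of the roots of $p$ taken with multiplicity. Since each $q_i$ is square-free and the $q_i$ are pairwise coprime, every root $r$ of $q_i$ is a root of $p$ of multiplicity exactly $m_i$, so for $1\le j\le t$
\[
p_j=\sum_{i=1}^{t}m_i\!\!\sum_{r:\,q_i(r)=0}\!\! r^{\,j}=\sum_{i=1}^{t}m_i\,V_{i,j},
\]
that is, $(p_1,\dots,p_t)^{T}=V^{T}{\bf m}$. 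Thus recovering ${\bf m}$ from the power sums is exactly the question of inverting $V^{T}$.

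The second ingredient is that the data $(c_1,\dots,c_t)$ and $(p_1,\dots,p_t)$ determine one another. Since $p$ is monic, $c_i=(-1)^i e_i$, where $e_i$ is the $i$-th elementary symmetric function of the roots of $p$ with multiplicity; in particular $c_1,\dots,c_t$ depend only on the roots and not on the (possibly unknown) degree $n$. Newton's identities express $p_k$ as an integer polynomial in $c_1,\dots,c_k$, and conversely give $c_k=-\tfrac1k\big(p_k+\sum_{j=1}^{k-1}c_jp_{k-j}\big)$; the divisions by $1,\dots,t$ are where the hypothesis $\mathrm{char}\,K=0$ enters, and over such a field these two families of formulas are mutually inverse. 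Consequently ``${\bf m}$ is determined by $c_1,\dots,c_t$'' is equivalent to ``${\bf m}$ is determined by $(p_1,\dots,p_t)^{T}=V^{T}{\bf m}$,'' which in turn holds precisely when the square matrix $V^{T}$, equivalently $V$, is non-singular. For the ``if'' direction this also yields the promised algorithm: compute $p_1,\dots,p_t$ from $c_1,\dots,c_t$ by Newton's identities and output ${\bf m}=(V^{T})^{-1}(p_1,\dots,p_t)^{T}$.

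The step I expect to demand the most care is the ``only if'' direction, i.e.\ exhibiting genuine non-uniqueness when $V$ is singular. A nonzero ${\bf w}\in\overline K^{\,t}$ with $V^{T}{\bf w}=0$ gives two exponent vectors, ${\bf m}$ and ${\bf m}+{\bf w}$, whose associated products have identical power sums $p_1,\dots,p_t$, hence identical first $t$ proper coefficients --- so the coefficients cannot pin ${\bf m}$ down. Reading ``uniquely determined'' as injectivity of the recovery map ${\bf m}\mapsto(c_1,\dots,c_t)$ over $\overline K$, this finishes the proof immediately. The delicate point, which is where I anticipate the real work, is to obtain from ${\bf w}$ a competing vector that is again a bona fide multiplicity vector with positive integer entries: one wants a kernel vector of $V^{T}$ defined over $\mathbb Q$, after which clearing denominators and translating ${\bf m}$ by a sufficiently large multiple of it keeps every coordinate a positive integer. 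Establishing that such a rational kernel vector is available (or isolating exactly when it is) is the one place in the argument that goes beyond routine linear algebra and Newton's identities.
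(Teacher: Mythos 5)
Your proposal follows the same route as the paper: express the first $t$ power sums of the roots of $p$ (with multiplicity) as $V^T{\bf m}$, observe via Newton's identities / Faddeev--LeVerrier that $(c_1,\dots,c_t)$ and $(p_1,\dots,p_t)$ determine one another over a field of characteristic zero, and conclude that unique recovery of ${\bf m}$ is equivalent to nonsingularity of $V$. The paper's proof is word-for-word this argument (with ${\bf m}=(\Lambda C V)^{-1}{\bf c}$ giving the explicit formula), so in substance you are doing the same thing; incidentally, the paper's proof writes $(V{\bf m})_i$ where the theorem statement really calls for $(V^T{\bf m})_i$, so your use of the transpose is actually the more careful reading of the statement, and of course nonsingularity of $V$ and $V^T$ are equivalent.

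The one place you go beyond the paper is in flagging the ``only if'' direction. The paper simply says that if ${\bf m}$ is uniquely determined then the linear system $V{\bf m}=(\Lambda C)^{-1}{\bf c}$ has exactly one solution, hence $V$ is nonsingular; it does not address whether a singular $V$ actually produces a \emph{second genuine multiplicity vector} with positive integer entries. You correctly identify this as the delicate point and correctly sketch the fix when the kernel contains a rational vector (clear denominators and translate ${\bf m}$ by a large multiple). Note that the entries $V_{i,j}$ are the power sums of the roots of $q_i$, hence lie in $K$ by Newton's identities, so the kernel is defined over $K$ --- but not necessarily over $\mathbb{Q}$ when $K\supsetneq\mathbb{Q}$. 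So your worry is real if one insists on the stronger, ``two achievable multiplicity vectors'' reading; the paper (and, implicitly, your ``if'' direction) is content with the weaker linear-algebraic reading, under which the one-line argument is complete. Either you should explicitly adopt the linear-algebraic reading, or you should acknowledge that the stronger reading requires the extra hypothesis you name.
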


\begin{remark}
 Observe that when $q_i = x-r_i$ (i.e., $p$ splits over $K$) Theorem \ref{T:4} provides the same conclusion as Theorem \ref{T:1}.
\end{remark}

In Section 2 we prove both of the main results.  In particular, we prove Theorem \ref{T:1} via an algorithm which allows us to compute exactly the multiplicity of each root.  In Section 3 we prove that this algorithm is numerically stable in the sense that the requisite number of bits of precision to approximate each root in order to compute its multiplicity {\em exactly} is linear in $k$ and the logarithms of (a) the ratio between the largest and smallest difference of roots, (b) the largest root, and (c) the largest coefficient of codegree at most $k$.  We conclude by demonstrating the utility of this algorithm by computing  previously unknown characteristic polynomials of two 3-uniform hypergraphs in Section 4.

\section{Proof of Main Results}

We begin with a proof of Theorem \ref{T:1}.

\begin{proof}[Proof of Theorem \ref{T:1}]
 Fix such a monic polynomial $p$ with distinct roots $r_0 = 0$, $r_1$, $\ldots$, $r_k$ with respective multiplicities $m_0$, $m_1$, $\ldots$, $m_k$.  Ignoring $r_0$ for a moment, let ${\bf r} = \langle r_1, \dots, r_k\rangle^T$ and ${\bf m} = \langle m_1, \dots, m_k\rangle^T$.   We denote the  Vandermonde matrix 
 \[ 
 V = \left( \begin{array}{cccc}
1 & 1 & \dots & 1 \\
r_1 & r_2 & \dots & r_k \\
r_1^2 & r_2^2 & \dots & r_k^2 \\
\vdots & \vdots & \vdots & \vdots \\
 r_1^{k-1} & r_2^{k-1}& \dots & r_k
^{k-1}\end{array} \right)
 \] 
 and consider 
 \[
 V_0 = \left( \begin{array}{cccc}
r_1 & r_2 & \dots & r_k \\
r_1^2 & r_2^2 & \dots & r_k^2 \\
\vdots & \vdots & \vdots & \vdots \\
 r_1^{k} & r_2^{k}& \dots & r_n^{k}\end{array} \right).
 \]  
 Let ${\bf p} \in \overline{K}^n$ where
 \[
 {\bf p}_i := \sum_{j=1}^k r_j^i m_j
 \]
 then 
 \[
 V_0{\bf m} = {\bf p}.
 \]
  Notice that $V_0 =  V \diag({\bf r})$ is non-singular as it is the product of two non-singular matrices.  We have then
 \begin{equation}
 \label{E:m}
 {\bf m} = V_0^{-1}{\bf p}.
 \end{equation}
 We present a formula for ${\bf p}$ which is a function of only the leading $k+1$ coefficients of $p$. 
 
 Let $A$ be the diagonal matrix where $r_i$ occurs $m_i$ times and note
 \[
 p(x) = \det(xI - A).
 \]
 By the Faddeev-LeVerrier algorithm (aka the Method of Faddeev, \cite{Gan}) we have for $j \geq 1$
 \begin{equation}
 \label{E:Faddeev}
 c_{j} = -\frac{1}{j}\sum_{i=1}^j c_{j-i} \tr(A^i) = -\frac{1}{j}\sum_{i=1}^j c_{j-i} {\bf p}_i.
 \end{equation}
Let ${\bf c} = \langle c_1, c_2, \dots, c_{k} \rangle^T$, $\Lambda  = -\diag(1, 1/2, \dots, 1/k)$, and 
\[ 
C = \left( \begin{array}{cccc}
c_0=1 & 0 & \dots & 0 \\
c_1 & c_0 & \dots & 0 \\
\vdots & \vdots & \ddots & \vdots \\
 c_{k-1} & c_{k-2}& \dots & c_0\end{array} \right).
\]
By Equation \ref{E:Faddeev},
${\bf c} = \Lambda C{\bf p}$.  Moreover, as $\Lambda $ and $C$ are invertible we have ${\bf p} = (\Lambda C)^{-1}{\bf c}$.  It follows from Equation \ref{E:m} that
\begin{equation}
\label{E:formula}
{\bf m} = (\Lambda CV_0)^{-1}{\bf c}.
\end{equation}
Furthermore, $m_0 = n - {\bf 1}\cdot{\bf m}$.
\end{proof}

We briefly remark about the proof of Theorem \ref{T:1}.  Problem \ref{P:1} has a flavor of polynomial interpolation: given $k$ points, how many (univariate) polynomials of degree $n$ go through each of the $k$ points?  If $n \leq k-1$ the polynomial is known to be unique and is relatively expensive to compute (as any standard text in numerical analysis will attest).  Our proof technique mimics this approach as the classical problem of determining $p = \sum_{i=0}^{k-1}{c_i}x^{(k-1)-i}$, which resembles $k$ distinct points $\{(x_i,y_i)\}_{i=1}^k$, can be solved by computing
\[
V^T{\bf c} = {\bf y}
\]
where ${\bf c} = \langle c_{k-1}, c_{k-2}, \dots, c_0\rangle^T$, $V$ is as previously defined given $r_i = x_i$, and ${\bf y} = \langle y_1, \dots, y_k\rangle^T$.  Suppose for a moment that each root is distinct so that
\[
p(x) = \prod_{i=1}^k (x - r_i).
\]
Then $c_j$, the codegree $j$ coefficient, is precisely the $j$th elementary symmetric polynomial in the variables $r_1, \dots, r_k$.  In the case of repeated roots we have that $c_j$ can be expressed using modified symmetric polynomials in the distinct roots where $r_i$ is replaced with $\binom{m_i}{j}r_i^j$.  The expression for each coefficient via these modified symmetric polynomials is given by Equation \ref{E:Faddeev}.  

Note that if the roots of $p$ are known, it is possible to determine $p$ with fewer coefficients than the number of distinct roots (e.g., when $p$ is a non-linear minimal polynomial).  We modify Theorem \ref{T:1} to include the case when some of the roots are known to occur with the same multiplicity.  We now prove Theorem \ref{T:4}.

\begin{proof}[Proof of Theorem \ref{T:4}]
The proof follows similarly to that of Theorem \ref{T:1}.  First suppose that $V$ is non-singular. Let ${\bf m} = \langle m_1, \dots, m_t \rangle^T$ and 
\[
{\bf p}_i = \sum_{j=1}^t \left(\sum_{r : q_j(r) = 0} r^i\right)m_j = (V{\bf m})_i
\]
so that if $V$ is non-singular, ${\bf m} = V^{-1}{\bf p}$.  Let $A$ be defined as in Theorem \ref{T:1}: the diagonal matrix where the roots of $q_i$ occur $m_i$ times and note
 \[
 p(x) = \det(xI - A).
 \]
We have by the the Faddeev-LeVerrier algorithm, for $j \geq 1$
\[
 c_{j} = -\frac{1}{j}\sum_{i=1}^j c_{j-i} \tr(A^i) = -\frac{1}{j}\sum_{i=1}^j c_{j-i} {\bf p}_i
\]
so that for ${\bf c} = \langle c_1, \dots, c_t\rangle^T$, $\Lambda  = -\diag(1, 1/2, \dots, 1/t)$ we have
\[
{\bf m} = (\Lambda CV)^{-1}{\bf c}.
\]
If instead ${\bf m}$ is uniquely determined by the first $t$ proper coefficients then $V{\bf m} = (\Lambda C)^{-1}{\bf c}$ has exactly one solution, hence $V$ is non-singular.
\end{proof}

As a non-example, consider the minimal polynomial of $\alpha = \sqrt{2}$, $q_{\alpha}(x) = x^2-2 \in \mathbb{Z}[x]$ and suppose 
\[
p = q_\alpha^d  = x^{2d} + 0x^{2d-1} -2dx^{2d-2} + \dots\in \mathbb{Z}[x].
\]
Observe that we cannot determine $d$ given $c_1 = 0$; moreover, this conclusion is unsurprising, given the hypotheses from Theorem \ref{T:4}, since
\[
V = [\sqrt{2} - \sqrt{2}] = [0] \in \mathbb{C}^{1,1}
\]
is singular.  However, by inspection we could determine $d$ given $c_2 = -2d$ and in fact the matrix $[\sqrt{2}^2 + (-\sqrt{2})^2] = [4] \in \mathbb{C}^{1 \times 1}$ is non-singular.  Indeed we could determine $d$ with a simple change of variable: apply Theorem \ref{T:4} to $p_0 = (y-2)^d$ where $y = x^2$.

\section{The stability of computing multiplicities}

We now consider the feasibility of computing ${\bf m}$.  In general, the matrix $V_0$ in Theorem \ref{T:1} may be poorly conditioned, so this calculation is often difficult to carry out even for modest values of $k$.
The goal of this section is to show that if each root of a monic polynomial $p(x) \in \mathbb{Z}[x]$ is approximated by a disk of radius at most $\epsilon$, a ``reasonable'' precision, then the interval approximating ${\bf m}_i$, resulting from a particular algorithm, contains exactly one integer.  That is, we provide an algorithm for exactly computing ${\bf m}$ via \cite{Sag} with substantially improved numerical stability over simply following the calculations in Section \ref{sec:intro}.

\begin{theorem}
\label{T:2}
Let $p(x) = \sum_{i=0}^t c_ix^{t-i} \in \mathbb{Z}[x]$ be a monic polynomial with distinct non-zero roots $r_1,\dots, r_n$ such that $|r_1| \geq |r_2| \geq \dots \geq |r_n| > 0$.  If each root is approximated by a disk of radius $\epsilon$ such that
\[
\epsilon < \frac{m^2r}{2^{2n+7}n^5} \left(\frac{m}{MRc}\right)^n = \left ( \frac{m}{MRc} \right)^{n(1+o(1))}
\]
where
\begin{itemize}
    \item{$M = \max\{\max\{|r_i - r_j|\}, 1\}$ and $m = \min\{\min\{|r_i - r_j|: i \neq j\}, 1\}$}
    \item{$R = \max\{|r_1|,1\}$ and $r = \min\{|r_n|,1\}$}
    \item{$c = \max\{\max\{|c_i|\}_{i=1}^n,1\}$.}
\end{itemize} 
then the resulting disk approximating ${\bf m}_i = (\Lambda CV_0)^{-1}{\bf c}_i \in \mathbb{Z}$ contains exactly one integer (i.e., the computation of ${\bf m}$ is stable). 
\end{theorem}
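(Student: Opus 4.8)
The statement to be proved is that the output disk for each $\mathbf m_i$ contains exactly one integer. Since every entry of $\mathbf m$ is an integer and disk arithmetic returns a validated enclosure — a disk guaranteed to contain the exact value $\mathbf m_i$ — it suffices to show that this disk has radius strictly less than $1/2$: a set of diameter less than $1$ contains at most one point of $\mathbb Z$ (indeed of $\mathbb Z[i]$), and it does contain $\mathbf m_i$. The whole argument is thus a forward rounding-error analysis of the algorithm of \cite{Sag} run in disk arithmetic with each root $r_j$ presented as a disk of radius $\epsilon$.

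It is convenient to first quarantine the part of the computation that never sees the roots. The vector $\mathbf p = (\Lambda C)^{-1}\mathbf c$ consists of the power sums of the roots of $p$ (by Newton's identities, equivalently the Faddeev recursion), hence lies in $\mathbb Z^n$ and is computed exactly in $\mathbb Q$; all I need from it is an a priori size bound $\|\mathbf p\|_\infty \le n^{O(1)}2^{O(n)}c^{O(n)}$, which follows from $\|\Lambda^{-1}\|_\infty = n$ together with a bound on $\|C^{-1}\|_\infty$ coming from the unit lower-triangular Toeplitz structure of $C$ (expand $C^{-1}=\sum_{j\ge 0}(I-C)^j$ and count paths). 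Solving $V_0\mathbf m=\mathbf p$ then amounts, via $V_0 = V\diag(\mathbf r)$, to a Vandermonde system $V\mathbf y=\mathbf p$ followed by $\mathbf m = \diag(\mathbf r)^{-1}\mathbf y$, which is precisely what the stable inversion of \cite{Sag} carries out.

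The second step is to push the radii through the $O(n^2)$ arithmetic operations of that algorithm. Because $\epsilon$ is assumed minuscule, each disk operation linearizes — a product $(z,\rho)(w,\sigma)$ contributes $O(|w|\rho+|z|\sigma)$, a squaring $O(|z|\rho)$, and a reciprocal $1/(z,\rho)$ contributes $O(\rho/|z|^2)$ once $\rho\ll|z|$ — so one checks along the way that $\epsilon$ this small keeps every denominator disk away from $0$, validating the linearizations. The only denominators occurring are products of root differences $r_i-r_j$ and root moduli, bounded below by $m$ and $r$; the numerators and intermediate quantities are bounded above using $M$ for difference products, $R$ for powers of roots, and $\|\mathbf p\|_\infty$ for the data. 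Propagating these through the $n$ levels of the divided-difference recursion yields an output radius at most $\epsilon$ times an amplification factor of order $m^{-(n+2)}r^{-1}(MRc)^n$ (up to a $2^{O(n)}n^{O(1)}$ factor), and requiring the product of this amplification with $\epsilon$ to be below $1/2$ is exactly the stated hypothesis on $\epsilon$.

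I expect the crux to be the precise exponents of $m,M,R,c$ in the second step. The naive route --- perturb $V_0$ by $\|E\|_\infty = O(n^2R^{n-1}\epsilon)$ and invoke $\|\widetilde{V_0}^{-1}-V_0^{-1}\| \lesssim \|V_0^{-1}\|^2\|E\|$ with the closed form of the Vandermonde inverse (a ratio of an elementary symmetric function of the remaining roots to a product of root differences) --- costs an extra factor of roughly $\|V_0^{-1}\|$, replacing $m^{n+2}$ by $m^{2n}$ and $(MR)^n$ by $R^{3n}$, which is too weak. Matching the stated bound forces one to use that \cite{Sag} propagates relative/componentwise errors rather than incurring the squared condition number, and to do so with complex roots, where the sign structure that makes the classical real analysis clean is unavailable; carrying out this sharper estimate while keeping the accumulated $O(\epsilon^2)$ remainders controlled is where the work concentrates.
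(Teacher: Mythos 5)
Your outer framework matches the paper's exactly: you correctly note that it suffices to show the validated disk for $\mathbf m_i$ has diameter below $1$ (the paper aims for $<1/2$, which is the same goal), you split off $\mathbf p = (\Lambda C)^{-1}\mathbf c$ as exact integer data, and you pin the whole difficulty on bounding the diameter amplification incurred by multiplying by $V_0^{-1}$. Your observation that the naive route --- perturbing $V_0$ and invoking $\|\widetilde{V_0}^{-1}-V_0^{-1}\|\lesssim\|V_0^{-1}\|^2\|E\|$ --- squares the Vandermonde condition number and misses the stated bound is also correct and is genuinely the reason the paper does not argue that way.

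The gap is that the crux estimate is stated but not proved. You assert an amplification factor of order $m^{-(n+2)}r^{-1}(MRc)^n$ times $2^{O(n)}n^{O(1)}$ and then remark that ``carrying out this sharper estimate \dots is where the work concentrates,'' i.e.\ you have reverse-engineered the target from the theorem statement rather than derived it. The paper supplies exactly that missing piece as Theorem~\ref{T:3} and its corollary: it inverts $V$ via the explicit factorization $V^{-1}=\Delta WL$ of Soto-Eguibar and Moya-Cessa \cite{Sot} (not a divided-difference recursion in the Bj\"orck--Pereyra sense, and not \cite{Sag}, which is the software reference), and it bounds $d(\Delta)$, $d(W)$, $d(L)$ separately by direct disk-arithmetic computations using the power/product rules from \cite{Pet}, then composes them entrywise --- this is where the single factor of $m^{-(n+2)}$ (rather than $m^{-2n}$) and the $(MR)^n$ (rather than $R^{3n}$) actually come from, because each entry of $\Delta$, $W$, $L$ is a product of at most $n$ root differences or root powers, never a quotient of two elementary symmetric polynomials. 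Without identifying and analyzing a concrete factorization whose entries have that controlled form, your paragraph three remains a plausibility argument: ``propagating these through the $n$ levels \dots yields'' is precisely the claim that needs proof, and the reader has no way to check that the $O(\epsilon^2)$ remainders and the sign-free complex-root cancellations you flag at the end do not spoil the exponents.
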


Notice that $MRc \geq 1$ and $MRc  = 2$ for $x^n -1$ when $n$ is even.  Roots of unity occur frequently in the spectrum of hypergraphs; see Section 3.  In particular, $k$-cylinders -- essentially $k$-colorable $k$-graphs -- have a spectrum which is invariant under multiplication by the $k$th roots of unity.  Consider now $p(x) = x^n-1$.  We have $m = \sqrt{2 - 2\cos(\frac{2\pi}{n})}$ so that
\[
\epsilon < \frac{(2 - 2 \cos(\frac{2\pi}{n}))^\frac{n+2}{2}}{2^{3n+7}n^5}.
\]
While $\epsilon$ may seem small, we are chiefly concerned with the number of bits of precision needed to approximate each root.  Indeed for $x^n-1$ we need 
$|\ln \epsilon| = O(n \ln n)$ bits of precision by the small-angle approximation.  

\begin{remark}
 The bound on $\epsilon$ is ``reasonable'', as the number of bits required to approximate each root is proportional to the number of distinct roots of $p$ and the logarithms of the ratio of the smallest difference of the roots with the largest difference of roots, the largest root, and the largest coefficient.
\end{remark}

In practice, the difficulty of computing ${\bf m}$ as described in Theorem \ref{T:1} is in computing the inverse of the Vandermonde matrix, whose entries may vary widely in magnitude and which may be very poorly conditioned.  The task of inverting Vandermonde matrices has been studied extensively.  In \cite{Eis}, Eisenberg and Fedele provide a brief history of the topic as well results concerning the accuracy and effectiveness of several known algorithms.  However, these algorithms provide good approximations for the entries of $V^{-1}$, whereas we seek to express them exactly as elements of the field of algebraic complex numbers, since ${\bf m}$ is a vector of integers.  In \cite{Sot}, Soto-Eguibar and Moya-Cessa showed that $V^{-1} = \Delta WL$ where $\Delta $ is the diagonal matrix
 \begin{displaymath}
   \Delta_{i,j} = \left\{
     \begin{array}{ll}
       \prod_{k=1, k \neq i}^n \frac{1}{r_i - r_k}& : i=j \\
       0 & : i \neq j,
     \end{array}
   \right.
\end{displaymath} 
$W$ is the lower triangular matrix
 \begin{displaymath}
   W_{i,j} = \left\{
     \begin{array}{ll}
       0 & : i> j \\
       \prod_{k=j+1, k \neq i}^n (r_i - r_k) & : \text{ otherwise},
     \end{array}
   \right.
\end{displaymath} 
and $L$ is the upper triangular matrix
 \begin{displaymath}
   L_{i,j} = \left\{
     \begin{array}{ll}
       0 & : i< j \\
       1 & : i = j\\
       L_{i-1, j-1} - L_{i-1, j-1} r_{i-1} & : i \in [2,n], j \in [2,i-1].
     \end{array}
   \right.
\end{displaymath}
Using this decomposition, it is possible to compute ${\bf m}$ exactly. To prove Theorem \ref{T:2} we first provide an upper bound for the diameter of the disk approximating an entry of $\Delta $, $W$, and $L$, respectively; to do so, we extensively employ computations of \cite{Pet} found in Chapter 1.3.  We present the necessary background here.  

Let $D(z, \epsilon)$ be the open disk in the complex plane centered at $z$ of radius $\epsilon$.  For $A = D(a,r_1)$, $B= D(b,r_2)$ complex open disks, we have
\begin{enumerate}
\item {$A \pm B = D(a\pm b, r_1 + r_2)$}
\item{$1/B = D\left(\frac{\bar{b}}{|b|^2 - r_2^2}, \frac{r_2}{|b|^2 - r_2^2}\right)$}
\item{$AB = D(ab, |a|r_2 + |b|r_2 + r_1r_2)$}
\end{enumerate}
In particular, for the special case of $A^n$ we have
\begin{equation}
\label{E:power}
D(a,r_1)^n = D(a^n, (|a| - r_1)^n - |a|^n).
\end{equation}
Moreover, given $0 < r_1 < 1 \leq |a|$
\begin{equation}
\label{E:bound}
 (|a| - r_1)^n - |a|^n \leq r_1(2|a|)^n
\end{equation}
since
\[
(|a| - r_1)^n - |a|^n \leq \sum_{k=1}^n \binom{n}{k} r_1^k |a|^{n-k}
\leq r_1(2|a|)^n.
\]
Finally, let $d(A) = 2r_1$ denote the diameter of $A$ and let 
\[
|A|= |a| + r_1
\] 
be the absolute value of $A$.  Then for $u \in \mathbb{C}$ we have
\begin{enumerate}
\item{$d(A\pm B) = d(A) + d(B)$}
\item{$d(uA) = |u| d(A)$}
\item{$d(AB) \leq |B|d(A) + |A|d(B)$}
\end{enumerate}

For the remainder of this paper some numbers will be exact (e.g., rational numbers) while others will be approximated by a disk.  The non-exact entries of a matrix $M \in \mathbb{C}^{n \times n}$ will be referred to as disks; this will be clear from the problem formulation or derived from the computations.  With a slight abuse of notation we use $d(M_{i,j})$ and $|M_{i,j}|$ to denote the diameter and absolute value of the disk approximating the entry $M_{i,j}$.  Moreover, we write
\[
d(M) = \max\{d(M_{i,j}) : i,j \in [n]\} \text{ and } |M| = \max\{|M_{i,j}| : i,j \in [n]\}.
\]
In the case when the entry is exact, the diameter is zero and the absolute value (of the disk) is simply the modulus.  

\begin{theorem}
\label{T:3}
Assume the notations of Theorem \ref{T:4}, let $V$ denote the Vandermonde matrix from the proof of Theorem \ref{T:1}, and let $V^{-1} = \Delta WL$ by \cite{Sot}.  Then
\[
d(V^{-1}) \leq  \frac{2^{2n+4}n}{m^2}\left(\frac{MR}{m}\right)^n\epsilon.
\]
and 
\[
|V^{-1}|\leq  2n\left(\frac{RM}{m}\right)^n.
\]
\end{theorem}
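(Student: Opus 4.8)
The plan is to bound $d(V^{-1})$ and $|V^{-1}|$ by pushing the entrywise disk arithmetic through the factorization $V^{-1} = \Delta W L$ supplied by \cite{Sot}, using the diameter and absolute-value inequalities for disk sums and products recorded above. First I would treat each of the three factors separately. For $\Delta$, each diagonal entry is $\prod_{k \neq i}(r_i - r_k)^{-1}$: the modulus of each factor $r_i - r_k$ lies in $[m, M]$, so $|\Delta_{i,i}| \leq m^{-(n-1)}$; for the diameter I would expand $r_i - r_k$ as a difference of two disks of radius $\epsilon$ (diameter $2\epsilon$ each, so the difference has diameter $4\epsilon$... more carefully $2\epsilon$ using $d(A-B)=d(A)+d(B)$ with radius-$\epsilon$ disks giving $d=2\epsilon$ per root hence $2\epsilon+2\epsilon$), invert via rule (2) to control $d(1/(r_i-r_k))$ in terms of $m$ and $\epsilon$, and then multiply the $n-1$ such disks together using $d(AB) \leq |B|\,d(A) + |A|\,d(B)$ iteratively. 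That iteration produces a factor of roughly $(n-1)$ times the maximal absolute value raised to the power $n-2$ times the maximal single-factor diameter — this is the standard ``product rule blows up linearly in the number of factors'' estimate, and it is where the $m^{-2}$, the leading $n$, and one power of $m^{-n}$ in the final bound come from.

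Next I would handle $W$, whose entries are products $\prod_{k=j+1,\,k\neq i}^n (r_i - r_k)$ of at most $n$ differences, each of modulus at most $M$ and each a disk of diameter at most $4\epsilon$; the same product-rule induction gives $|W_{i,j}| \leq M^n$ (absorbing the ``at most'' into an inequality) and $d(W_{i,j}) \lesssim n M^{n-1}\epsilon$. For $L$, I would argue by induction on $i$ using the recurrence $L_{i,j} = L_{i-1,j-1} - L_{i-1,j-1}\,r_{i-1} = L_{i-1,j-1}(1 - r_{i-1})$: since $|1 - r_{i-1}| \leq 1 + R \leq 2R$, one gets $|L_{i,j}| \leq (2R)^n$ and, tracking diameters through rule (3), $d(L_{i,j}) \lesssim n(2R)^{n-1}\epsilon$ (the subtraction and the multiplication by the disk $r_{i-1}$ each contribute, and the induction multiplies by $|1-r_{i-1}|$ at each of $\leq n$ stages). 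I should double check the exact form of the $L$ recurrence and whether $L$'s entries are exact or disks — from the statement they involve the $r_i$, so they are disks — but the structure is a telescoping product of factors $1 - r_{i-1}$, so the bound is clean.

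Finally I would assemble the pieces: $V^{-1} = \Delta W L$ is a product of three $n\times n$ matrices, so $|V^{-1}| \leq n^2 \cdot |\Delta|\,|W|\,|L|$ by the usual matrix-product-of-disks bound (each entry of a product is a sum of $n$ products of entries, and $d$/$|\cdot|$ of a sum of $n$ disks is at most $n$ times the max), and for the diameter $d(V^{-1}) \leq n^2\big(d(\Delta)|W||L| + |\Delta|d(W)|L| + |\Delta||W|d(L)\big)$ after applying rule (3) twice. Substituting $|\Delta| \leq m^{-(n-1)}$, $|W| \leq M^n$, $|L| \leq (2R)^n$ and the three diameter bounds, one of the three diameter terms dominates; collecting the powers of $2$, $n$, $m$, $M$, $R$ and simplifying $n^2 \cdot n = n^3$ against the stated bound (which has only a single factor of $n$), I expect the main bookkeeping obstacle to be keeping all the constants and polynomial-in-$n$ factors small enough — the paper's bound is quite tight (a lone $n$, a $2^{2n+4}$, an $m^{-2}$, and $(MR/m)^n$), so I will need to be economical: use $|W| \le M^{n}$ rather than $M^{n-1}(1+\text{something})$, fold the various $(2R)^n$ and $2$'s from the power-of-disk estimates \eqref{E:power}--\eqref{E:bound} into the single $2^{2n+4}$, and note that since ${\bf m}$ is only needed to integer precision we may be generous in the $o(1)$ exponent. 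The key structural fact making everything work is that every entry of every factor is a product of at most $n$ disks each of modulus $O(M)$ or $O(R)$ and diameter $O(\epsilon)$, so each entry has diameter $O(n \cdot (\text{stuff})^{n-1} \epsilon)$ and absolute value $O((\text{stuff})^n)$; the hard part is purely the constant-chasing to land inside the stated inequality rather than merely an inequality of the same shape.
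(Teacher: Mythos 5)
Your overall strategy is the same as the paper's: push the disk arithmetic of Petkovi\'c--Petkovi\'c through the factorization $V^{-1}=\Delta W L$, bound diameter and absolute value of each factor, then combine. But two specific choices you make would prevent you from landing on the stated constants, and the first one is a genuine missed idea rather than mere bookkeeping.

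First, you estimate the matrix product with $d(V^{-1}) \leq n^2\bigl(d(\Delta)|W||L| + |\Delta|d(W)|L| + |\Delta||W|d(L)\bigr)$, attributing a factor of $n$ to each of the two matrix multiplications and then worrying about the resulting $n^3$ (after folding in the per-entry $n$'s from your product-rule iteration) against the paper's lone $n$. The paper avoids one of those factors outright: since $\Delta$ is \emph{diagonal}, $\Delta W$ is computed entrywise --- $(\Delta W)_{i,j} = \Delta_{i,i}W_{i,j}$, a product of exactly one disk from each factor, with no sum over $k$. Only the multiplication by $L$ produces a sum of $n$ terms. You should make this observation explicitly; without it the argument gives a polynomially worse bound and the constant-chasing you anticipate does not close.

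Second, for the per-entry diameters of $\Delta$, $W$, $L$ you propose the iterated product rule $d(AB) \leq |B|d(A) + |A|d(B)$, which yields estimates of the form $n\,(\text{modulus})^{n-1}\epsilon$. The paper instead bounds each entry by the worst-case \emph{power} of a single disk and applies its Equations \eqref{E:power}--\eqref{E:bound}, giving diameter bounds of the shape $(2\,\text{modulus})^n\epsilon$ --- no leading factor of $n$, at the cost of an extra $2^n$. These two routes give inequalities of different shape; you would not reproduce $2^{2n+4}$ and a single $n$ by the iterative route, even after fixing the first issue. Both are valid upper bounds, but if the goal is to prove the stated inequality rather than one of the same flavor, you need the power-of-a-disk estimate. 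Your instinct to ``double check the $L$ recurrence'' is also well placed: as printed, $L_{i,j}=L_{i-1,j-1}(1-r_{i-1})$, which the paper nonetheless bounds via $d(D_s^n) \leq (2R)^n\epsilon$; reconciling that with the recurrence (and with your $|1-r_{i-1}|\le 2R$ observation) is a real loose end worth tightening in either write-up.
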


 \begin{proof}
 Let 
\[
D_i := D(r_i, \epsilon)
\]
denote the disk centered at $r_i$ with radius $\epsilon$.  By Equation \ref{E:bound} we have for $s \neq t$
 \begin{align*}
 d(\Delta)&\leq d \left ( \left(\frac{1}{D_s - D_t}\right)^n \right ) \\
 &\leq 2^n\left(\frac{2\epsilon}{m^2-(2\epsilon)^2}\right)\left(\frac{m}{m^2-(2\epsilon)^2}\right)^n \\
 &\leq \frac{2^{2n+2}}{m^{n+2}} \cdot \epsilon ,
 \end{align*}
 since $\epsilon < m/4$, 
 \[
 d(W) \leq d((D_s - D_t)^n) \leq 2^{n+1}M^n\epsilon,
 \]
 and 
 \[
 d(L) \leq d(D_s^n) \leq (2R)^n \epsilon.
 \]
   We first consider $d(\Delta W)$.  Observe that $\Delta W$ is upper triangular and each non-zero entry of $\Delta W$ is a product of exactly one non-zero entry of $\Delta $ and $W$.  In this way
 \[
 d((\Delta W)_{i,j}) \leq |W_{i,j}|d(\Delta _{i,i}) + |\Delta _{i,i}|d(W_{i,j})
 \leq\frac{2^{2n+3}}{m^2} \left(\frac{M}{m}\right)^n\epsilon
 \]
and
 \[
    |\Delta W| \leq 2\left(\frac{M}{m}\right)^n.
\]
 
 We now determine $d(\Delta WL)$ by first computing
 \[
 d((\Delta W)_{i,k} L_{k,j}) \leq |L_{k,j}|d(\Delta W_{i,k}) + |\Delta W_{i,k}|d(L_{k,j}) 
 \leq  \frac{2^{2n+4}}{m^2}\left(\frac{RM}{m}\right)^n\epsilon.
 \]
 Hence 
 \[
 d(V^{-1}) = d(\Delta WL) \leq \max_{i,j} \sum_{k=1}^n d((\Delta W)_{i,k}L_{k,j}) \leq\frac{ 2^{2n+4}n}{m^2}\left(\frac{RM}{m}\right)^n\epsilon
 \]
and
 \[
     |V^{-1}| \leq 2n\left(\frac{RM}{m}\right)^n.
 \]
 \end{proof}
 
  In our computations we are concerned with $V_0 = V \cdot \diag({\bf r})$ where $\diag({\bf r}) = \diag(r_1, \dots, r_n)$ so that
 \[
 V_0^{-1} = \diag({\bf r})^{-1}V^{-1}.
 \]
 The following Corollary is immediate from the observation that 
 \[
 d(\diag({\bf r})^{-1}) \leq \frac{2}{r}.
 \]
\begin{corollary}
\[
d(V^{-1}_0)\leq   \frac{2^{2n+6}n}{m^2r}\left(\frac{MR}{m}\right)^n\epsilon
\]
and
\[
|V_0^{-1}| \leq \frac{2n}{r} \left(\frac{RM}{m}\right)^n.
\]
\end{corollary}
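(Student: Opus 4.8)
The plan is to read both inequalities straight off Theorem~\ref{T:3} using the factorization $V_0 = V\diag(\mathbf r)$, which gives $V_0^{-1} = \diag(\mathbf r)^{-1}V^{-1}$. Because $\diag(\mathbf r)^{-1}$ is diagonal, every entry of $V_0^{-1}$ is a single disk product $(V_0^{-1})_{i,j} = (1/D_i)\,(V^{-1})_{i,j}$, where $D_i = D(r_i,\epsilon)$ is the disk approximating the $i$th root; so the only new work is to estimate the reciprocal disks $1/D_i$ and then push everything through the disk-arithmetic rules of Section~3.

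First I would apply the reciprocal rule for disks to get
\[
1/D_i = D\!\left(\frac{\overline{r_i}}{|r_i|^2-\epsilon^2},\ \frac{\epsilon}{|r_i|^2-\epsilon^2}\right),
\]
so that $|1/D_i| = 1/(|r_i|-\epsilon)$ and $d(1/D_i) = 2\epsilon/(|r_i|^2-\epsilon^2)$. Since $|r_i|\ge r$ for every $i$ and $\epsilon$ is small under the hypotheses in force (in particular $\epsilon < r/2$, as in Theorem~\ref{T:2}), this gives $|1/D_i|\le 2/r$ (in fact $(1+o(1))/r$) and $d(1/D_i)\le 4\epsilon/r^{2}$, hence $|\diag(\mathbf r)^{-1}|\le 2/r$.

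The absolute-value bound is then immediate: since $|AB| = |A|\,|B|$ for disks and $\diag(\mathbf r)^{-1}$ is diagonal, $|V_0^{-1}| \le |\diag(\mathbf r)^{-1}|\cdot|V^{-1}| \le \tfrac{2}{r}\cdot 2n\left(\tfrac{RM}{m}\right)^n$, which is the stated estimate up to the harmless factor built into the crude bound on $|1/D_i|$. For the diameter I would apply $d(AB)\le|B|\,d(A)+|A|\,d(B)$ to the entrywise product, maximize over $i,j$, and substitute the two bounds of Theorem~\ref{T:3}:
\[
d(V_0^{-1}) \le |\diag(\mathbf r)^{-1}|\,d(V^{-1}) + |V^{-1}|\,d(\diag(\mathbf r)^{-1}) \le \frac{2}{r}\cdot\frac{2^{2n+4}n}{m^2}\left(\frac{MR}{m}\right)^n\epsilon + 2n\left(\frac{RM}{m}\right)^n\cdot\frac{4\epsilon}{r^2}.
\]
The first summand already has the shape $\tfrac{2^{2n+5}n}{m^2 r}\left(\tfrac{MR}{m}\right)^n\epsilon$ claimed (with room to spare in the power of $2$), so collapsing the right-hand side into $\tfrac{2^{2n+6}n}{m^2 r}\left(\tfrac{MR}{m}\right)^n\epsilon$ reduces to checking that the second summand — the contribution of the uncertainty in the scalars $1/r_i$ — is subordinate to the first.

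Apart from the routine bookkeeping of the powers of $2$ and of $n,m,M,R$, that last comparison is the only point that is not completely automatic, and it is the step I would be most careful about: the main term scales like $\epsilon\,r^{-1}$ while the $d(1/D_i)$ term scales like $\epsilon\,r^{-2}$, so to land on the stated bound (which carries only $r^{-1}$) one must absorb the extra factor $r^{-1}$ using a lower bound on $r$ available under the hypotheses of Theorem~\ref{T:2} — for $p\in\mathbb{Z}[x]$ with $p(0)\ne 0$ the product of the moduli of the roots is at least $1$, so $r\ge R^{-(n-1)}$ — which together with the stringent smallness of $\epsilon$ pins the $r^{-2}$-term below the $r^{-1}$-term. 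Everything else in the corollary is direct substitution into Theorem~\ref{T:3}.
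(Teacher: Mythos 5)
Your high-level plan is the same as the paper's: factor $V_0^{-1} = \diag(\mathbf r)^{-1}V^{-1}$ and push the two bounds of Theorem~\ref{T:3} through the disk-arithmetic rules. The paper's own justification is terse — it declares the corollary ``immediate from the observation that $d(\diag({\bf r})^{-1}) \leq 2/r$,'' which read literally as a diameter bound is useless (it lacks the factor $\epsilon$) and is evidently a slip for the modulus bound $|\diag(\mathbf r)^{-1}|\leq 2/r$, after which the paper treats the diagonal factor as if it had zero diameter. You are more careful: you correctly note that the entries $1/D_i$ are genuine disks of nonzero diameter, so the product rule $d(AB)\leq|B|d(A)+|A|d(B)$ produces the extra term $|V^{-1}|\,d(\diag(\mathbf r)^{-1})$, which scales like $\epsilon r^{-2}$ rather than $\epsilon r^{-1}$, and you correctly identify that reconciling this with the stated $r^{-1}$ bound is the only nontrivial step.

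But your proposed resolution does not actually close this gap, for three reasons. First, ``the stringent smallness of $\epsilon$'' cannot help: both summands are proportional to $\epsilon$, so shrinking $\epsilon$ leaves their ratio $8m^2/(2^{2n+5}r)$ unchanged; absorption requires $r\geq m^2\,2^{-(2n+2)}$, a constraint on $r$, not on $\epsilon$. Second, the corollary is derived from Theorem~\ref{T:3} alone, which has no integrality hypothesis, so the Mahler-type lower bound you invoke is not available in this generality. Third, even granting $p\in\mathbb{Z}[x]$ monic with $p(0)\neq 0$, the bound you quote is off: the product of root moduli with multiplicity equals $|p(0)|\geq 1$, so the correct estimate is $r\geq R^{-(\deg p-1)}$, with the exponent being the degree minus one, not $n-1$ (the number of distinct roots minus one); when $\deg p \gg n$ this is far weaker, and it does not in general imply $r\geq m^2\,2^{-(2n+2)}$ for $R$ large. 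In short, you have correctly located the spot where the paper's ``immediate'' is doing unstated work, but the absorption step needs either a stronger hypothesis on $r$ or a revised bound of the form $\epsilon/r^2$; your argument as given does not supply either.
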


 We are now able to prove Theorem \ref{T:2}.

\begin{proof}[Proof of Theorem \ref{T:2}]
Recall ${\bf m} = V_0^{-1} C^{-1}\Lambda ^{-1}{\bf c}$ as defined in the proof of Theorem \ref{T:1}.  Fortunately, the remainder of the computations are straightforward as $C^{-1}, \Lambda ^{-1}$, and ${\bf c}$ have integer, and thus exact, entries.  As 
 \begin{displaymath}
   C_{i,j}^{-1} = \left\{
     \begin{array}{lr}
       0 & : i < j \\
       1 & : i = j\\
       -\sum_{k=1}^{i-1} c_{i-k} C_{k,j}^{-1}  & : i > j
     \end{array}
   \right.
\end{displaymath}
we have
\[
d(V_0^{-1}C^{-1}) \leq n (nc^{n-1}) \frac{2^{2n+6}n}{m^2r}\left(\frac{MR}{m}\right)^n\epsilon =
\frac{2^{2n+6}n^3}{m^2rc}\left(\frac{MRc}{m}\right)^n\epsilon
\]
Further, since $\Lambda ^{-1} = -\diag(1,2,\dots,n)$ we have
\[
d(V_0^{-1}C^{-1}\Lambda ^{-1}) \leq |-n| d(V_0^{-1}C^{-1}) = \frac{2^{2n+6}n^4}{m^2rc}\left(\frac{MRc}{m}\right)^n\epsilon
\]
and, finally,
\begin{align*}
d(V_0^{-1}C^{-1}\Lambda ^{-1}){\bf c}) &\leq nc \cdot d(V_0^{-1}C^{-1}\Lambda ^{-1}) \\
&\leq \frac{2^{2n+6}n^5}{m^2r}\left(\frac{MRc}{m}\right)^n\epsilon < \frac{1}{2}.
\end{align*}
Thus each interval will contain at most one integer as desired. 
 
\end{proof}

\section{Application to Hypergraph Spectra}
 For the present authors, Problem \ref{P:2} arose organically in the context of spectral hypergraph theory.  In short, the authors were concerned with determining high-degree polynomials when the roots (without multiplicity) are known and all but the lowest-codegree coefficients are too costly to compute.  We briefly explain the context of spectral hypergraph theory for those interested in the origin of such questions.  However, our presentation of the computations is self-contained: the reader who wishes to see Theorem \ref{T:1} applied immediately may skip the next few paragraphs. 
 
For $k \geq 2$, a {\em $k$-uniform hypergraph} is a pair ${\mathcal H} = (V, E)$ where $V = [n]$ is the set of \em{vertices} and $E \subseteq \binom{[n]}{k}$ is the set of \em{edges}.  It is common to refer to such hypergraphs as {\em $k$-graphs} when $k > 2$ and as just {\em graphs} when $k =2$.  We are particularly interested in the computation of the characteristic polynomial of a uniform hypergraph.  The characteristic polynomial of the adjacency matrix of a graph is straightforward to compute; however, the same cannot be said for hypergraphs.  The {\em characteristic polynomial} of the {\em (normalized) adjacency hypermatrix} $\mathcal{A}$ of ${\mathcal H}$, denoted $\phi_{{\mathcal H}}(\lambda)$, is the resultant of a family of $|E|$ homogeneous polynomials\footnote{Namely, the Lagrangians of the links of all vertices minus $\lambda$ times the $(k-1)$-st power of the corresponding vertices' variables, or, equivalently, the coordinates $f_v$ of the gradient of the $k$-form naturally associated with $\mathcal{A}-\lambda \mathcal{I}$.  The (symmetric) hyperdeterminant is the unique irreducible monic polynomial in the entries of $\mathcal{A}$ whose vanishing corresponds exactly to the existence of nontrivial solutions to the system $\{f_v = 0\}_{v \in V(\mathcal{H})}$.} of degree $k-1$ in the indeterminate $\lambda$; the {\em order $k$}, {\em dimension $n$} hypermatrix $\mathcal{A} \in \mathbb{R}^{[n]^k}$, whose rows and columns are indexed by the vertices of $\mathcal{H}$ and whose $(v_1,\ldots,v_k)$ entry is $1/(k-1)!$ times the indicator of the event that $\{v_1,\ldots,v_k\}$ is an edge of $\mathcal{H}$ is also sometimes called the {\em adjacency tensor} of $\mathcal{H}$. Equivalently, one can define the characteristic polynomial to be the hyperdeterminant of $\mathcal{A}-\lambda \mathcal{I}$ (as in \cite{Gel}), where $\mathcal{I}$ is the identity hypermatrix, i.e., $\mathcal{I}(v_1,\ldots,v_k)$ is the indicator of the event that $v_1 = \cdots = v_k$.  The set $\sigma({\mathcal H}) = \{r : \phi_{{\mathcal H}}(r) = 0\} \subset \mathbb{C}$ is the {\em spectrum} of ${\mathcal H}$ and each $r \in \sigma({\mathcal H})$ is an {\em eigenvalue} of ${\mathcal H}$.  It is known that $\phi_{\mathcal H}(\lambda)$ is a monic polynomial of degree $n(k-1)^{n-1}$, and many of the properties of characteristic polynomials of graphs generalize nicely to hypergraphs; we refer the interested reader to \cite{Coo} and \cite{Qi} for further exploration of the topic. 

Given a $k$-graph ${\mathcal H}$ we aim to compute $\phi_{\mathcal H}(\lambda)$.  Unfortunately, the resultant is known to be NP-hard to compute (\cite{Gre}) despite its utility in several fields of mathematics, perhaps nowhere more so than computational algebraic geometry.  Nonetheless, one can attempt to imitate classical approaches to computing characteristic polynomials of ordinary graphs.  In particular, Harary \cite{Har} (and Sachs \cite{Sac}) showed that the coefficients of $\phi_G(\lambda)$ can be expressed as a certain weighted sum of the counts of  subgraphs of $G$.  The authors have established an analogous result for the coefficients of $\phi_{{\mathcal H}}(\lambda)$ \cite{Cla2}.  This formula allows one to compute many low codegree coefficients -- i.e., the coefficients of $x^{d-k}$ for $k$ small and $d = \deg(\phi_{\mathcal{H}})$ -- by a certain linear combination of subgraph counts in $\mathcal{H}$.  Unfortunately, this computation becomes exponentially harder as the codegree increases, making computation of the entire (often extremely high degree) characteristic polynomial impossible for all but the simplest cases.  A method of Lu-Man \cite{Lu}, ``$\alpha$-normal labelings'' is an alternative approach that can obtain all eigenvalues with relative efficiency, but it gives no information about their multiplicities.  {\it Combining} these two techniques, however, yields a method to obtain the full characteristic polynomial: obtain a list of roots, compute a few low-codegree coefficients using subgraph counts, and then deduce the roots' multiplicities.  Therefore, we arrive at the following special case of Problem \ref{P:2}.

\begin{problem}
\label{P:3}
Let $K$ be a field of characteristic zero.  Is it true that a monic polynomial $p \in K[x]$ of degree $n$ with exactly $k$ distinct, known roots is determined by its $k$ proper leading coefficients?
\end{problem}

Returning to our application of Theorem \ref{T:1}, we can compute $\phi_{\mathcal H}(\lambda)$ if we know $\sigma({\mathcal H})$ and the first $|\sigma({\mathcal H})|$ coefficients (note this includes coefficients which are zero as well as the leading term).  In \cite{Lu}, Lu and Man introduced {\em $\alpha$-consistent incidence matrices} which can be used to find the eigenvalues of ${\mathcal H}$ whose corresponding eigenvector has all non-zero entries.  These eigenvalues are referred to as {\em totally non-zero eigenvalues} and we denote the set of totally non-zero eigenvalues of ${\mathcal H}$ as $\sigma^+({\mathcal H})$.  The authors showed in \cite{Cla} that for $k > 2,$
\[
\sigma({\mathcal H}) \subseteq \bigcup_{H \subseteq {\mathcal H}} \sigma^+(H)
\]
where $H = (V_0,E_0) \subseteq {\mathcal H}$ if $V_0 \subseteq V$ and $E_0 \subseteq E$ (c.f.~Cauchy Interlacing Theorem when $k=2$).  Computing $\sigma({\mathcal H})$ by way of $\sigma^+(H)$ involves solving smaller multi-linear systems than the one involved in computing $\phi_{\mathcal H}(\lambda)$.  Generally speaking, $|\sigma({\mathcal H})|$  is considerably smaller than the degree of $\phi_{\mathcal H}(\lambda)$.  In practice, this approach has yielded $\phi_{\mathcal H}(\lambda)$ when other approaches of computing $\phi_{\mathcal H}(\lambda)$ via the resultant have failed. 

We present two examples demonstrating these computations.  Consider the \emph{hummingbird hypergraph} ${\mathcal B} = ([13],E)$ where
\[
E = \{ \{1,2,3\},\{1,4,5\},\{1,6,7\},\{2,8,9\},\{3,10,11\},\{3,12,13\}\}.
\]
We present a drawing of ${\mathcal B}$ in Figure \ref{F:1} where the edges are drawn as shaded in triangles.  Note that 
\[
\deg(\phi_{\mathcal B}) = n(k-1)^{n-1} = 13\cdot2^{12} = 53248
\]
and, since ${\mathcal B}$ is a hypertree (and thus a 3-cylinder), its spectrum is invariant under multiplication by any third root of unity \cite{Coo}.  We compute the minimal polynomials of the totally non-zero eigenvalues of $\phi_{\mathcal B}$ via \cite{Mac},
\begin{align*}
    \phi_{\mathcal B} = & x^{m_0}(x^9-6x^3 + 8x^3-4)^{m_1}(x^9-5x^6+5x^3-2)^{m_2} \\ &\cdot (x^3-1)^{m_3}(x^6-4x^3+2)^{m_4}(x^9-4x^6+3x^3-1)^{m_5}\\
    &\cdot (x^6-3x^3+1)^{m_6}(x^3-3)^{54}(x^3-2)^{m_7}.
\end{align*}
With the intent of applying Theorem \ref{T:1} to $\phi_{\mathcal B}$ we consider the change of variable $y = x^3$ and observe that we need to determine $c_3, c_6, \dots, c_{48}$ as there are sixteen distinct nonzero cube roots.  We compute
\begin{align*}
c_3 &= -18432 \\
c_6 &= 169843968 \\ 
c_9 &= -1043209971456 \\ 
c_{12} &= 4804960103034624 \\
c_{15} &=-17702435302276375440 \\
c_{18} &= 
54341319772238850901668 \\ c_{21} &= -142960393819753656566577552 \\
c_{24} &= 329036832924106136747171871042 \\  c_{27} &= -673063350744784559041302787109576 \\ c_{30}&= 1238925078774563882036470496247467682 \\ c_{33}&= -2072891735870949695930286542580991559916 \\ c_{36} &= 3178738418917825954994865036362341584776658 \\ c_{39} &= -4498896549573513724009044022281523093964642496 \\ c_{42} &= 5911636016042739623328802656744094043553245557890 \\ c_{45} &= -7249053168113446546908444934275696322928768819713512 \\ c_{48} &=  8332230937213678426258491158832963153453272812465162851
\end{align*}
Using Theorem \ref{T:2} we have $M < 3, m > .14, R < 4.39,r > .38,$  and $c= c_{48}$ so that each root of $\phi_{\mathcal B}$ needs to be approximated to at most 3091 bits of precision. Using SageMath (\cite{Sag}), we obtain
\begin{align*}
    \phi({\mathcal B}) =& x^{20983}(x^9-6x^3 + 8x^3-4)^{729}(x^9-5x^6+5x^3-2)^{972} \\ &\cdot (x^3-1)^{1782}(x^6-4x^3+2)^{486}(x^9-4x^6+3x^3-1)^{324}\\
    &\cdot (x^6-3x^3+1)^{216}(x^3-3)^{54}(x^3-2)^{119}.
\end{align*}
In Figure \ref{F:1} we provide a plot of $\sigma(\phi_{\mathcal B})$ drawn in the complex plane where a disk is centered at each root and each disk's area is proportional to the algebraic multiplicity of the underlying root in $\phi_{\mathcal B}$.

\begin{figure}[ht]
\begin{center}
\includegraphics[scale = .48]{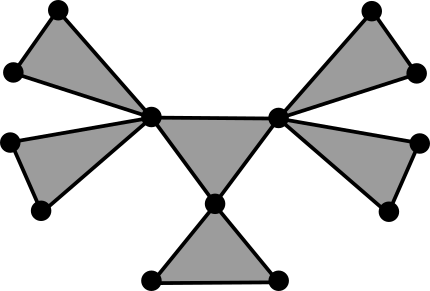} 
\hspace{1cm}
\includegraphics[scale = .25]{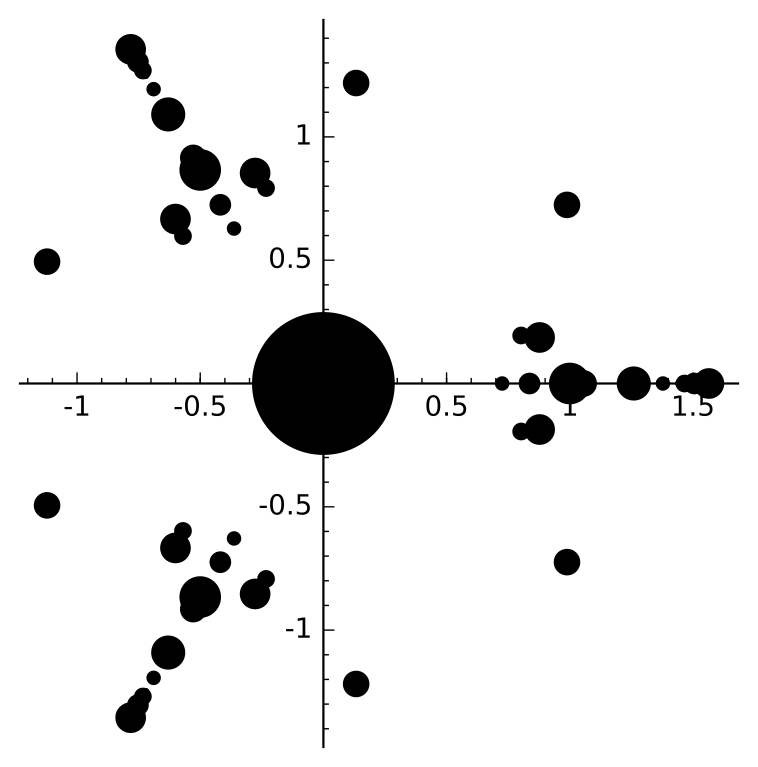} 
\end{center}
\caption{The hummingbird hypergraph and its spectrum.}
\label{F:1}
\end{figure}

Now consider the \emph{Rowling hypergraph}\footnote{The name was chosen for its resemblance to an important narrative device in \cite{Row}.}
\[
{\mathcal R} = ([7], \{1,2,3\},\{\{1,4,5\},\{1,6,7\},\{2,5,6\},\{3,5,7\}\}).
\]
A drawing of ${\mathcal R}$ is given in Figure \ref{F:2} where the edges are drawn as arcs and its spectrum is drawn similarly to that of $\phi_{{\mathcal B}}$; note that ${\mathcal R}$ is also the Fano plane minus two edges.  We have
\[
\deg(\phi_{\mathcal R}) = n(k-1)^{n-1} = 7\cdot 2^6 = 448.
\]
It is easy to verify that ${\mathcal R}$ is not a 3-cylinder; however, its spectrum is invariant under multiplication by any third root of unity (see Lemma 3.11 of \cite{Fan}).  By \cite{Lu} we have
\begin{align*}
    \phi_{\mathcal R} = & x^{m_0}(x^3-1)^{m_1}(x^{15}-13x^{12}+65x^9-147x^6+157x^3-64)^{m_2} \\
    &\cdot (x^6-x^3+2)^{m_3}(x^6-17x^3+64)^{m_4}
\end{align*}
With the intent of applying Theorem \ref{T:3} we need to determine only $c_3, c_6, c_9, c_{12}$.  We have
\begin{align*}
    c_3 &= -240 \\
    c_6 &= 28320 \\
    c_9 &= -2190860 \\
    c_{12} &= 125012034
\end{align*}
By Theorem \ref{T:2} we have $M < 4.5, m > .69, R < 2.25,$ and  $r = 1$ so that at most 252 digits of precision are required to approximate each root.  We compute 
\begin{align*}
    \phi_{\mathcal R} = & x^{133}(x^3-1)^{27}(x^{15}-13x^{12}+65x^9-147x^6+157x^3-64)^{12} \\
    &\cdot (x^6-x^3+2)^6(x^6-17x^3+64)^3
\end{align*}

\begin{figure}[ht]
\begin{center}
\includegraphics[scale = .5]{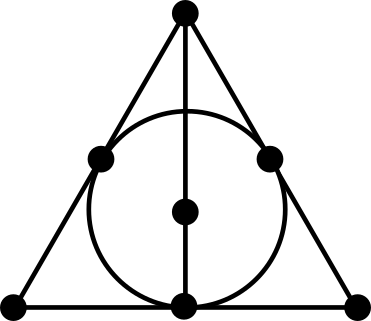} 
\hspace{1cm}
\includegraphics[scale = .5]{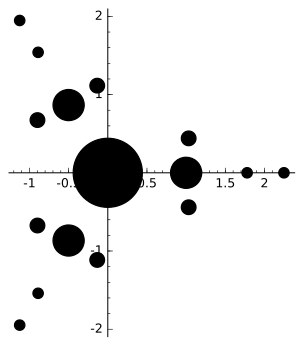} 
\end{center}
\caption{The Rowling hypergraph and its spectrum.}
\label{F:2}
\end{figure}

\section{Acknowledgments}

Thanks to Alexander Duncan for helpful discussions and insights.

\bibliographystyle{amsplain}

\end{document}